\begin{document}
\title{\Large\textbf{Blow-up of a stable stochastic differential equation}}
\author{Matti Leimbach, Michael Scheutzow\\
Technische Universit\"at Berlin}

\newcommand{\Addresses}{{
  \bigskip
  \footnotesize
  \noindent
  M.~Leimbach, \textsc{Technische Universit\"at, Berlin, MA 7-5, Str. des 17.Juni 135, 10623 Berlin, Germany},
  \texttt{leimbach@math.tu-berlin.de}

  \medskip
  \noindent
  M.~Scheutzow, \textsc{Technische Universit\"at, Berlin, MA 7-5, Str. des 17.Juni 135, 10623 Berlin, Germany},
  \texttt{ms@math.tu-berlin.de}

}}

\date{}
\maketitle

\newtheoremstyle{dotless} 
                        {8pt}    
                        {14pt}    
                        {}         
                        {}         
                        {\bfseries}
                        {}        
                        {\newline} 
                        {}
\theoremstyle{dotless}

\newtheorem{definition}{Definition}[section]
\newtheorem{example}[definition]{Example}
\newtheorem{theorem}[definition]{Theorem}
\newtheorem{proof+}{Proof}
\newtheorem{thesis}[definition]{Lemma}
\newtheorem{corollary}[definition]{Corollary}
\newtheorem{comment}[definition]{Remark}

\newenvironment{proof++}{\textbf{Beweis}\newline}{} 

\newcommand{\Prob}[2]{{\mathbb{P}_{#2}\left(#1\right) }}
\newcommand{\PR}[2]{{\mathbb{P}_{#2}\bigg( #1 \bigg) }}
\newcommand{\EqSplit}[2]{{\begin{equation}\begin{split}
\label{#2}
#1
\text{.}
\end{split}
\end{equation}}}


\pagestyle{myheadings}
\markright{\leftmark}

\begin{abstract}
We examine a 2-dimensional ODE which exhibits explosion in finite time. Considered as an SDE with additive white noise, it is known to be complete - in the sense that for each initial 
condition there is almost surely no explosion. Furthermore, the associated Markov process even admits an invariant probability measure. 
On the other hand, as we will show, the corresponding local stochastic flow will almost surely not be strongly complete, i.e.~there exist (random) initial conditions for which the 
solutions explode in finite time. 
\end{abstract}

\newcommand{\Wone}[1]{{W^{(1)}_{#1}}}
\newcommand{\Wtwo}[1]{{W^{(2)}_{#1}}}

\section{Introduction}

Consider the complex-valued It\^{o}-type stochastic differential equation (SDE)
\begin{equation}
\label{eq:driving}
\mathrm{d}Z_t =\left( Z_t^n + F(Z_t)\right) \mathrm{d}t+\sigma \mathrm{d}B_t \text{,}
\end{equation}
where $n\geq 2$, $\sigma \geq 0$, $F \in \mathcal{O}(\vert z \vert^{n-1})$ as $\vert z \vert \to \infty$ is locally Lipschitz and 
$B=\Wone{}+ i\Wtwo{}$ is a complex Brownian motion on a filtered probability space $\left(\Omega,
\mathcal{F},(\mathcal{F}_t),\mathbb{P}\right)$ satisfying the usual conditions.\\
Under the additional assumption that $F$ is a polynomial of $z$ and $\overline{z}$ of degree at most $n-1$ it is known 
(see \cite{HerzogMattingly1} and \cite{HerzogMattingly2}) that for every fixed initial condition $(X_0,Y_0)=(x_0,y_0)$ the one-point 
motion, i.e.\ the process which solves this equation and starts in $(x_0,y_0)$, exhibits non-explosion almost surely if $\sigma>0$ and, 
moreover, the associated Markov process admits a (unique) invariant probability measure.  
This is a remarkable fact, since there is explosion in finite time for some initial conditions in the deterministic case 
(i.e. $\sigma=0$). This is obvious in the particular case $F=0$ (take an initial condition on the positive real line) and will follow 
from our main result for general $F$. 
{Turning an explosive ODE into a non-explosive SDE with an invariant distribution by adding noise is often called noise-induced stability and was also studied in \cite{Scheutzow93} and more recently in \cite{Herzog11},\cite{Kolba11}.}\\
Now, we would like to know if the noise induces an even stronger kind of stability, namely the existence of a random attractor. In this paper, we show 
that the corresponding local stochastic flow will explode (or {\em blow up}) almost surely and therefore there cannot be a random attractor (for the definition and basic properties of 
random attractors, see \cite{CF94}).
SDEs which have a unique global solution for each initial condition are called {\em complete}. Since the local stochastic flow associated to 
(\ref{eq:driving}) explodes, it is -- by definition -- not strongly complete. So far there are only few examples which are known to be complete but not strongly complete, see for instance \cite{Elworthy78},\cite{LiScheutzow11}.

\section{Transformation into Cartesian coordinates}
For our purpose it is convenient to transform equation (\ref{eq:driving}) into Cartesian coordinates. The rest of this paper deals only with equation (\ref{eq:cartesian_driving}) 
below.\\
Denote the real and imaginary part of $F$ by $F_1$ and $F_2$, i.e. $F=F_1+iF_2$. Further, there are functions $\hat{F_1},\hat{F_2}\colon \mathbb{R}^2 \to \mathbb{R}$, such that $F_j(x+iy)= \hat{F_j}(x,y)$, $j=1,2$. If we rewrite $Z_t=X_t+iY_t$, SDE (\ref{eq:driving}) is equivalent to
\begin{equation}
\label{eq:cartesian_driving}
\begin{split}
\mathrm{d}X_t &=\left(\sum_{j=0}^{\lfloor\frac{n}{2}\rfloor }(-1)^j \binom{n}{2j} X_t^{n-2j}Y_t^{2j}+\hat{F_1}(X_t,Y_t)\right) \mathrm{d}t+\sigma \mathrm{d}\Wone{t} \text{,}\\
\mathrm{d}Y_t &=\left(\sum_{j=0}^{\lfloor\frac{n-1}{2}\rfloor }(-1)^j \binom{n}{2j+1} X_t^{n-2j-1}Y_t^{2j+1}+\hat{F_2}(X_t,Y_t)\right) \mathrm{d}t+\sigma \mathrm{d}\Wtwo{t} \text{.}
\end{split}
\end{equation}
Abbreviate 
\begin{equation*}
\begin{split}
b_1(x,y)&\coloneqq\sum_{j=0}^{\lfloor\frac{n}{2}\rfloor }(-1)^j \binom{n}{2j} x^{n-2j}y^{2j} \text{,}\\
b_2(x,y)&\coloneqq\sum_{j=0}^{\lfloor\frac{n-1}{2}\rfloor }(-1)^j \binom{n}{2j+1} x^{n-2j-1}y^{2j+1} \text{.}
\end{split}
\end{equation*}
At first sight these drift terms look quite unhandy, but the following lemma yields convenient expressions.
\begin{thesis}
For $x>0$, $y\in \mathbb{R}$ we have
\begin{equation*}
\begin{split}
b_1(x,y)&=\left(x^2+y^2\right)^{\frac{n}{2}}\cos\left(n\arctan\left(\frac{y}{x}\right)\right)  \text{,}\\
b_2(x,y)&=\left(x^2+y^2\right)^{\frac{n}{2}}\sin\left(n\arctan\left(\frac{y}{x}\right)\right) \text{.}
\end{split}
\end{equation*}
\end{thesis}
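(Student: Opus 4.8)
The plan is to recognize that the two drift polynomials are precisely the real and imaginary parts of $z^n$ for $z=x+iy$, and then to pass to polar form. First I would compute, via the binomial theorem,
\[
(x+iy)^n = \sum_{k=0}^n \binom{n}{k} x^{n-k} (iy)^k = \sum_{k=0}^n \binom{n}{k} i^k\, x^{n-k} y^k.
\]
Splitting the sum according to the parity of $k$ and using $i^{2j}=(-1)^j$ and $i^{2j+1}=(-1)^j i$, the terms with even index $k=2j$ collect into the real part and those with odd index $k=2j+1$ into the imaginary part, yielding exactly
\[
(x+iy)^n = b_1(x,y) + i\, b_2(x,y).
\]

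Next I would write $z=x+iy$ in polar form. Since $x>0$, the point $z$ lies in the open right half-plane, so its modulus is $r=(x^2+y^2)^{1/2}$ and its principal argument is $\theta=\arctan(y/x)\in(-\tfrac{\pi}{2},\tfrac{\pi}{2})$. This is precisely the place where the hypothesis $x>0$ enters: it guarantees that $\arctan(y/x)$ returns the correct argument of $z$ without any branch correction by $\pm\pi$ that would be needed in the left half-plane.

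By De Moivre's formula,
\[
z^n = r^n e^{i n\theta} = (x^2+y^2)^{\frac{n}{2}} \bigl( \cos(n\theta) + i \sin(n\theta) \bigr).
\]
Comparing the real and imaginary parts of this identity with the binomial expansion above gives the two claimed formulas simultaneously. The computation is entirely elementary; the only point requiring any care is the choice of branch of the argument, which is exactly what the restriction to $x>0$ resolves.
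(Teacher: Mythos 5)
Your proposal is correct and follows essentially the same route as the paper's own proof: expanding $(x+iy)^n$ by the binomial theorem, splitting by parity of the index to identify $b_1$ and $b_2$ as real and imaginary parts, and comparing with the polar form $r^n e^{in\phi}$ where $r=\sqrt{x^2+y^2}$ and $\phi=\arctan(y/x)$ for $x>0$. Your explicit remark on why $x>0$ fixes the branch of the argument is a welcome clarification of a point the paper states only implicitly.
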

\begin{proof}
Write $z$ in Cartesian and polar coordinates, i.e. $z=x+iy=re^{i\phi}$. For $x>0$ polar coordinates can be expressed in terms of cartesian coordinates via $r=\sqrt{x^2+y^2}$, $\phi=\arctan(y/x)$. Therefore,
\begin{equation*}
\begin{split}
z^n &= (x+iy)^n= \sum_{j=0}^n \binom{n}{j}x^{n-j}(iy)^j\\& =\sum_{j=0}^{\lfloor\frac{n}{2}\rfloor }(-1)^j \binom{n}{2j} x^{n-2j}y^{2j} +i\sum_{j=0}^{\lfloor\frac{n-1}{2}\rfloor }(-1)^j \binom{n}{2j+1} x^{n-2j-1}y^{2j+1}\text{,}\\
z^n &= r^n e^{ni\phi}=r^n\cos(n\phi)+ir^n\sin(n\phi)\\
&= \left(x^2+y^2\right)^{\frac{n}{2}}\cos\left(n\arctan\left(\frac{y}{x}\right)\right)  +i \left(x^2+y^2\right)^{\frac{n}{2}}\sin\left(n\arctan\left(\frac{y}{x}\right)\right) 
\text{.}
\end{split}
\end{equation*}
The lemma follows by comparing the real and imaginary parts of both expressions.
\end{proof}
\section{Defining the problem and main result}
First, we introduce local stochastic flows on $\mathbb{R}^d$, $d \ge 1$.
\begin{definition}
Let $\mathfrak{e}(s,x),\,s \geq 0,\, x \in \mathbb{R}^d$ be a random field with values in $(s,\infty)$, such that $\mathfrak{e}(s,x)$ is lower semicontinuous in $s$ and $x$.
Set $\mathbb{D}_{s,t}(\omega)\coloneqq \{x\in\mathbb{R}^d\colon \mathfrak{e}(s,x,\omega)>t\}$ and 
let $\phi_{s,t}(x,\omega), x\in\mathbb{R}^d, 0\leq s\leq t< \mathfrak{e}(s,x)$ be a continuous $\mathbb{R}^d$-valued random field defined on the random domain of parameters $(s,t,x)$ for which $x \in \mathbb{D}_{s,t}(\omega)$. Denote the range of $\phi_{s,t}(\cdot,\omega)$ on $\mathbb{D}_{s,t}(\omega)$ by $\mathbb{R}_{s,t}(\omega)$. $\phi$ (or $\phi_{s,t}$) is called a {\em local stochastic flow}, if for almost all $\omega\in\Omega$
\begin{itemize}
\item[i)]  $\phi_{s,s}(\cdot,\omega) =\mathrm{Id}_{\mathbb{R}^d}$ for all $s \geq 0$,
\item[ii)] $\phi_{s,t}(\cdot,\omega)\colon \mathbb{D}_{s,t}(\omega)\to\mathbb{R}_{s,t}(\omega)$ is a homeomorphism for all $0\leq s<t$ and the inverse is continuous in $(s,t,x)$,
\item[iii)]  $\phi_{s,u}(\cdot,\omega)=\phi_{t,u}(\phi_{s,t}(\cdot,\omega),\omega)$ holds on $\mathbb{D}_{s,u}(\omega)$ for all  $0\leq s\leq t\leq u$
\end{itemize}
holds true.\\
A local stochastic flow is called {\em stochastic flow} if for all $0\leq s \leq t$ and $\omega\in \Omega$ $\mathbb{D}_{s,t}(\omega)=\mathbb{R}_{s,t}(\omega)=\mathbb{R}^d$.
\end{definition}

According to \cite[Theorem 4.7.1]{Kunita97} there exists a local stochastic flow $\phi_{s,t}(x,\omega), x\in\mathbb{R}^2, 0\leq s\leq t< \mathfrak{e}(s,x)$, which is the maximal solution to equation (\ref{eq:driving}) starting at time $s$ in $x$, where $\mathfrak{e}(s,x)$ is the explosion time.\\
In the following, we write $\phi_{t}$ instead of $\phi_{0,t}$ and denote the $i$th component of $\phi_t$ by $\phi_t^{(i)}, i=1,2$. We use $\phi_t^{(1)}(z)$ and $X_t$ respectively $\phi_t^{(2)}(z)$ and $Y_t$ interchangeably, whenever the initial condition $z$ is not of importance or clear from the context.\\
Our main result is the explosion (or {\em blow up} or {\em lack of strong completeness}) of the local stochastic flow $\phi$.
\begin{theorem}
\label{thm:main}
\em
Let $\phi$ be the local stochastic flow associated to (\ref{eq:cartesian_driving}), then there exists $T \in (0,\infty)$ such that
\begin{equation*}
\label{eq:maintheorem}
\lim_{x_0\to\infty}\mathbb{P}\left(\sup_{z\in \mathfrak{I}}\sup_{t\leq T}\phi_{t}^{(1)}(z)=\infty\right)=1 \text{,}
\end{equation*}
where the initial set is given by $\mathfrak{I}\coloneqq\{x_0\}\times[-\tan\left(\frac{\pi}{2n}\right) x_0,\tan\left(\frac{\pi}{2n}\right)x_0]$.
\end{theorem}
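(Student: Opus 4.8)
The plan is to combine the additive structure of the noise with the geometry revealed by the Lemma, and to locate a single exploding trajectory inside the segment $\mathfrak{I}$ by a topological shooting argument. First I would record what $\mathfrak{I}$ means geometrically. By the Lemma, for $x>0$ the first drift component $b_1(x,y)=(x^2+y^2)^{n/2}\cos\!\left(n\arctan(y/x)\right)$ is nonnegative on the cone
\[
C\coloneqq\left\{(x,y):x>0,\ \left|\arctan\tfrac{y}{x}\right|\le\tfrac{\pi}{2n}\right\}=\left\{(x,y):x>0,\ |y|\le\tan\tfrac{\pi}{2n}\,x\right\},
\]
whose cross-section at abscissa $x_0$ is exactly $\mathfrak{I}$, and $b_1=0$ on its two bounding rays $\arctan(y/x)=\pm\pi/(2n)$. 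On those rays $b_2=\pm(x^2+y^2)^{n/2}$, so the leading drift $z^n$ points straight up on the upper ray and straight down on the lower ray, i.e.\ transversally out of $C$. Since $F\in\mathcal O(|z|^{n-1})$, at radius $r=\sqrt{x^2+y^2}$ large the contributions of $\hat F_1,\hat F_2$ are of lower order than the $z^n$-term, so (after shrinking the cone by a small angle) both the sign of $b_1+\hat F_1$ inside $C$ and the outward direction on the rays are preserved.

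Next I would reduce to a pathwise problem on a good event. Because the noise is additive, $\tilde X_t\coloneqq X_t-\sigma W^{(1)}_t$ and $\tilde Y_t\coloneqq Y_t-\sigma W^{(2)}_t$ solve a random ODE without a stochastic integral. Fix $T>0$; given $\varepsilon>0$ choose $M$ with $\mathbb P\!\left(\sup_{t\le T}\sigma|B_t|\le M\right)\ge 1-\varepsilon$, and work on this event, on which the noise enters each coordinate only as an additive perturbation of size at most $M$. As $x_0\to\infty$ the radii near $\mathfrak{I}$ are $\asymp x_0$, so the drift magnitude $\asymp x_0^n$ dwarfs both $M$ and the $\mathcal O(x_0^{n-1})$ size of $F$; this scale separation is what will make the dynamics behave like the deterministic $\dot z=z^n$ near $\mathfrak{I}$.

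The heart of the argument is a shooting step. On the good event, while $(X_s,Y_s)\in C$ the quantity $\tilde X_t=x_0+\int_0^t(b_1+\hat F_1)(X_s,Y_s)\,ds$ is nondecreasing (on the shrunk cone, for $x_0$ large), and in fact $b_1+\hat F_1\gtrsim X^n$ there, so a trajectory that never leaves $C$ satisfies $X_t\to\infty$ in time $\lesssim x_0^{-(n-1)}<T$. Consequently a trajectory can leave $C$ only through the upper or lower ray, never back towards small $x$ (as $\tilde X$ only increases). I would parametrize $\mathfrak{I}$ by $s\in[-1,1]$, writing $z(s)$, and let $A_\pm$ be the set of $s$ whose trajectory first exits $C$ through the upper, resp.\ lower, ray. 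Using the continuity of the flow in its initial condition and the transversality of the drift at the rays, $A_+$ and $A_-$ are open and disjoint; the two endpoint trajectories exit immediately through the respective rays, so $A_\pm$ are nonempty. As $[-1,1]$ is connected, $A_+\cup A_-\neq[-1,1]$, so there is $s^\ast$ whose trajectory never leaves $C$, and by the blow-up statement it satisfies $\sup_{t\le T}\phi_t^{(1)}(z(s^\ast))=\infty$.

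Assembling these pieces, on the good event (for $x_0$ large depending on $M$ and $T$) one gets $\sup_{z\in\mathfrak{I}}\sup_{t\le T}\phi_t^{(1)}(z)=\infty$, hence $\mathbb P(\,\cdot\,)\ge 1-\varepsilon$ for all large $x_0$, and letting $\varepsilon\to 0$ yields the claimed limit. I expect the shooting step to be the main obstacle: one must make the exit dichotomy $A_\pm$ rigorous \emph{uniformly} on the good event and as $x_0\to\infty$, that is, show that the outward transversality at the rays and the strict monotonicity of $\tilde X$ survive the bounded noise and the $F$-perturbation (which is where shrinking the cone by a small angle and invoking the scale separation enters), and that the non-exiting critical trajectory genuinely explodes within the fixed horizon $T$ rather than merely staying in $C$ with bounded $X$.
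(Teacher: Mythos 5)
Your overall architecture coincides with the paper's: shrink the cone to half-slope $\alpha<\tan\left(\frac{\pi}{2n}\right)$ so that $\cos\left(n\arctan(y/x)\right)\geq\varepsilon>0$ there, restrict to a noise-smallness event, show via a reversed Gronwall (Riccati) comparison that any trajectory which never leaves the cone blows up within time of order $x_0^{-(n-1)}<T$, and produce such a trajectory by a connectedness ("shooting") argument on the segment $\mathfrak{I}$. The gap is in your shooting step: you claim the sets $A_\pm$ are \emph{open}, "using the continuity of the flow in its initial condition and the transversality of the drift at the rays." For an ODE this is sound, but for the SDE it is not: over an interval of length $h$ the drift contributes $O(x_0^n h)$ while the Brownian increment is of order $\sqrt{h\log(1/h)}$, so at infinitesimal time scales the noise dominates the drift, and a trajectory that touches the ray $y=\alpha x$ need not cross it cleanly, transversality notwithstanding. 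One can show for each \emph{fixed} initial condition that $Y_t-\alpha X_t$ almost surely becomes strictly positive immediately after its first zero (it has a nondegenerate additive Brownian part), but openness of $A_+$ requires this simultaneously for every point of a random, uncountable set of initial conditions, and almost-sure statements for fixed $z$ do not aggregate over uncountably many $z$: random exceptional initial points whose trajectories touch the boundary tangentially and return cannot be excluded this way.

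This is exactly the pitfall the paper's Lemma \ref{lemma:bounds} is built to avoid: its topological step proves \emph{closedness} rather than openness. On the good event one establishes the quantitative barrier statement that any trajectory whose $y$-coordinate ever exceeds $\frac{\alpha}{2}x_1$ can never drop below $\frac{\alpha}{4}x_1$ before leaving the cone (the $b_2$-drift in the strip $\frac{\alpha}{4}x_1\leq Y_s\leq\alpha X_s$ dominates both the $\mathcal{O}(|z|^{n-1})$ perturbation and the bounded noise --- this is your "scale separation," made uniform). Being a single pathwise estimate valid for all initial points at once, it transfers to limits: if $z_n\to z$ with each $z_n$ exiting through the top by time $T$, then $z$ cannot exit through the bottom, so the sets $R$ (exit up) and $B$ (exit down) are closed, disjoint, nonempty (the endpoints of $\mathfrak{I}$ lie outside the shrunk cone), and hence cannot cover the connected segment $\mathfrak{I}$. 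To repair your proof, replace the openness-via-transversality claim by this closedness argument; the remaining pieces of your proposal --- the reduction on the good event, the lower bound $X_t\geq x_1=x_0-c$, the blow-up within the fixed horizon $T$, and the final estimate $\mathbb{P}\to 1$ as $x_0\to\infty$ --- match the paper's proof.
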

\begin{comment}
The theorem shows that we have almost sure blow-up:
\[
\mathbb{P}\left(\exists z\in\mathbb{R}^2\colon\sup_{t\leq T}\phi_{t}^{(1)}(z)=\infty\right)\geq\lim_{x_0\to\infty}\mathbb{P}\left(\sup_{z\in \mathfrak{I}}\sup_{t\leq T}\phi_{t}^{(1)}(z)=\infty\right)=1
\text{.}
\]
\end{comment}
\section{Heuristic idea}
\label{sec:heuristic}
For the rest of this paper fix $\alpha \in \left(0,\tan\left(\frac{\pi}{2n}\right)\right)$, and define the cone
\[
\mathcal{C} \coloneqq \{(x,y)\in\mathbb{R}^2 \colon x\geq x^*, \vert y \vert \leq \alpha x \}
\text{,}
\]
where we will choose $x^*>0$ sufficiently large later on (depending only on $n$ and $F$).\\
We know that for every initial condition in $\mathcal{C}$, the solution of the SDE will almost surely eventually leave $\mathcal{C}$. Some trajectories leave this region via the upper boundary and some via the lower boundary.
Due to the continuity of the map $z \mapsto \phi_t(z)$, one may hope to be able to show that there will be (random) initial conditions in between these two kinds of points 
for which the trajectories will actually remain inside $\mathcal{C}$ forever. In the following section we will see that if such trajectories exist, then they will explode
within  time $T$ (which is small provided the initial condition has a large $x$-component) provided the noise in the $x$-direction is not too large up to time $T$.

It then remains to show that there actually exist trajectories which stay inside  $\mathcal{C}$ forever (until they blow up).  
Let us sketch the idea of the proof in case {$\Wone{}\equiv 0$}: Figure 1 shows the image of the set of initial conditions $\{(x_0,y),\,|y| \le \tan\left(\frac{\pi}{2n}\right)x_0\}$ under the map $\phi_t$ 
for some $x_0>x^*>0$ and some $t>0$. The idea of the proof is to show that, for large $x_0$, it is very unlikely that any trajectory whose $y$-coordinate happens to be above level 
$\alpha x_0/2$ at some time will hit the level $y=\alpha x_0/4$ before leaving the cone  $\mathcal{C}$ through its upper boundary (Lemma \ref{lemma:bounds}).  This will then 
allow us to show the existence of points which stay inside $\mathcal{C}$ forever (until explosion).

\begin{figure}[H]
\centering\psset{unit=0.8cm}
\begin{pspicture}(0,0)(10,11)

\psline[linecolor=blue,linestyle=solid](1,5)(8,10)
\rput[bl](8.1,9.9){\color{blue}$y=\alpha x$}
\psline[linecolor=blue,linestyle=solid]{-}(1,5)(8,0)
\rput[bl](8.1,-0.1){\color{blue}$y=-\alpha x$}


\psline{->}(0,5)(10,5)
\psline{->}(1,0)(1,10)


\rput(1,10.25){$y$}
\rput(10.2,5){$x$}

\psline[linecolor=blue]{-}(4.5,7.5)(4.5,2.5)
\rput[bl](4.35,7.7){\color{blue}$x^{*}$}

\psline[linecolor=black,linestyle=dashed](5.9,9)(5.9,1)
\rput[bl](5.65,9.1){$x_{0}$}

\psline[linecolor=black,linestyle=dashed](5.9,6.75)(10,6.75)
\rput[bl](9.2,6.825){\tiny $y=\frac{\alpha}{2}x_{0}$}

\psline[linecolor=black,linestyle=dashed](5.9,3.25)(10,3.25)
\rput[bl](9.2,2.925){\tiny $y=-\frac{\alpha}{2}x_{0}$}

\psline[linecolor=red,linestyle=solid](5.9,5.875)(10,5.875)
\rput[bl](9.2,5.95){\color{red} \tiny $y=\frac{\alpha}{4}x_{0}$}

\psline[linecolor=red,linestyle=solid](5.9,4.125)(10,4.125)
\rput[bl](9.2,4.15){\color{red} \tiny $y=-\frac{\alpha}{4}x_{0}$}

\pscurve[showpoints=false,dotstyle=o](6.4,9.6)(7,9.3)(7.3,9.1)(7.8,8.6)(8,8)(8.2,7.5)(8.5,7.3)(8.7,7.25)(8.75,7.2)(8.78,7.1)(7.1,6)(7.5,5.5)(8,6.3)(8.7,4.7)(9.2,4)(9.3,3.3)(8.7,3.5)(8.4,2.4)(7.3,2.2)

\end{pspicture}
\caption{Bounds away from the $x$-axis}
\label{fig: second/third case}
\end{figure}
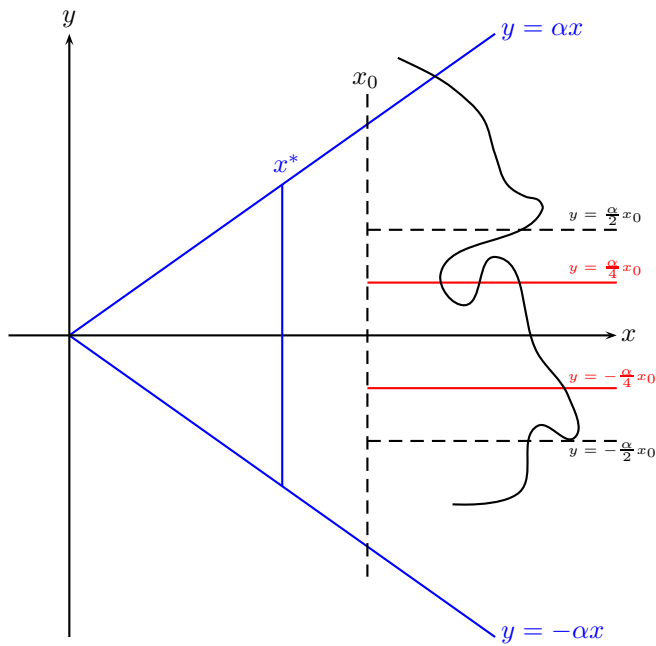

\section{Auxiliary results and proof of Theorem \ref{thm:main}}
First, we establish a lower bound for the $x$-component as long as the trajectory stays inside the cone $\mathcal{C}$. Then we formalize what is shown in Figure \ref{fig: second/third case}.\\
 Define
\begin{equation*}
\begin{split}
\overline{\tau}(z)&\coloneqq\inf\{t\geq 0\colon \phi^{(2)}_t(z)\geq \alpha \phi^{(1)}_t(z)\}\text{,}\\
\underline{\tau}(z)&\coloneqq\inf\{t\geq 0\colon  \phi^{(2)}_t(z)\leq -\alpha \phi^{(1)}_t(z)\}\text{,} \\
\tau(z)&\coloneqq \overline{\tau}(z)\wedge\underline{\tau}(z)
\text{.}
\end{split}
\end{equation*}
\subsection{Lower bound}
Note, that  we have a lower bound $\varepsilon >0$ of the following term uniformly for all $(x,y)\in C$
\[
\cos\left(n\arctan\left(\frac{y}{x}\right)\right) \geq \varepsilon >0
\text{.}
\]
Because of $F\in \mathcal{O}(\vert z\vert^{n-1} )\subset\text{o}(\vert z\vert^n )$ as $\vert z \vert \to \infty$, there exists $x^*>0$, such that
\[
\frac{\vert \hat{F_1}(x,y)\vert}{\left(x^2+y^2\right)^{\frac{n}{2}}}  \leq \frac{\varepsilon}{2}
\text{}
\] 
holds for all $x\geq x^*$, $y\in\mathbb{R}$.\\
Fix  $c>0$, $x_0\geq x^*+c$ and $z\in\mathfrak{I}=\{x_0\} \times [-\tan\left(\frac{\pi}{2n}\right) x_0,\tan\left(\frac{\pi}{2n}\right)x_0]$. Then on the event
\[
\{\tau(z)>T\} \cap \{\sup_{t\in[0,T]}\sigma \vert\Wone{t}\vert \leq c\}\text{,}
\]
we have for all $t\in[0,T]\cap D$ ($D$ is the maximal domain on which $X_t$ is defined)
\begin{equation*}
\begin{split}
X_t&=x_0+\int_0^t b_1(X_s,Y_s)+\hat{F_1}(X_s,Y_s)\mathrm{d}s +\sigma\Wone{t}\\
&\geq x_0-c +\int_0^t \left(X_s^2+Y_s^2\right)^{\frac{n}{2}}\left(\cos\left(n\arctan\left(\frac{Y_s}{X_s}\right)\right) - \frac{\vert \hat{F_1}(X_s,Y_s)\vert}{\left(X_s^2+Y_s^2\right)^{\frac{n}{2}}}\right)\mathrm{d}s\\
&\geq  x_0-c +\frac{\varepsilon}{2}\int_0^t X_s^n\mathrm{d}s
\text{.}
\end{split}
\end{equation*}
Applying a (reversed) Gronwall type argument (similar to  \cite[page 83f]{Bihari56}), we see that for all $t\in[0,T]\cap D$ 
\EqSplit{
X_t \geq \frac{x_0-c}{\left(1-\frac{\varepsilon}{2}(n-1)(x_0-c)^{n-1}t\right)^{\frac{1}{n-1}}}
}{eq:xlowerbound}
We define $T\coloneqq \frac{1}{\frac{\varepsilon}{2}(n-1)(x_0-c)^{n-1}}$ which is an upper bound for the explosion time, i.e.\ $(X_t)$ blows up up to time $T$ on the set $\{\tau(z)>T\} \cap \{\sup_{t\in[0,T]}\sigma \vert\Wone{t}\vert \leq c\}$.  Observe that the heuristic ideas remain valid on $\{\sup_{t\in[0,T]}\sigma \vert\Wone{t}\vert \leq c\}$ when replacing $x_0$ by $x_0-c$: if  the event  $\{\sup_{t\in[0,T]}\sigma \vert\Wone{t}\vert \leq c\}$ occurs, then  any trajectory starting in $\mathfrak{I}$ which does not 
leave the cone $\mathcal{C}$ up to $T$ blows up before (or at) time $T$.

\subsection{Bounds away from the $x$-axis}

Throughout the rest of the paper, $c >0$ will be fixed and $x_0>x^*+c$ is a number which will later be sent to infinity. \\
Because of $F\in\mathcal{O}(\vert z \vert^{n-1} )$ there is a $C>0$ such that for all $(x,y)\in\mathbb{R}^2$ with $\vert (x,y)\vert \geq x^*$, where $x^*>0$ is sufficiently large,
\[
\frac{\vert \hat{F_2}(x,y)\vert}{\vert (x,y) \vert^{n-1}}  \leq C
\]
holds true.
Further, we define 
$x_1:=x_0-c$ and $T:=\frac{1}{\frac{\varepsilon}{2}(n-1)x_1^{n-1}}$ as before. Observe that $x_1$ tends to $\infty$ and $T$ tends to $0$ as $x_0 \to \infty$.

\begin{thesis}
\label{lemma:bounds}
For $x^*$ sufficiently large, the following holds. 
Let $(X_t,Y_{t})_{t\in[0,T]}$ solve equation (\ref{eq:cartesian_driving})
with initial condition $(X_0,Y_0 )=z \in  \{x_0 \}\times [-\tan\left(\frac{\pi}{2n}\right) x_0,\tan\left(\frac{\pi}{2n}\right) x_0]$.\\
Define $\nu^+\coloneqq \inf\{t\geq 0\colon Y_t \geq\frac{\alpha}{2}x_1\}$.
Then for all $t\in[\nu^+,\tau(z)]\cap D$, where, again, $D$ is the maximal domain on which $X_t$ is defined, we have
\[
Y_{t}\geq \frac{\alpha}{4}x_1 \quad \text{ on } \quad \lbrace\sup_{t\in [0,T]}\sigma \vert \Wtwo{t} \vert \leq \frac{\alpha}{8}x_1\rbrace\cap \lbrace \inf_{t\in [0,T]\cap D} X_t \geq x_1\rbrace \eqqcolon B
\text{.}
\]
\end{thesis}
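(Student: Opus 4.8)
The plan is to exploit the fact that in the upper half of the cone, $\mathcal{C}\cap\{y\ge 0\}$, the drift in the $y$-direction pushes the trajectory \emph{away} from the $x$-axis, so that once $Y$ has reached level $\frac{\alpha}{2}x_1$ it cannot descend to level $\frac{\alpha}{4}x_1$ unless the noise $\sigma\Wtwo{}$ alone accounts for the whole descent, which is excluded on $B$. First I would record a pointwise lower bound on the $y$-drift. Using the expression $b_2(x,y)=(x^2+y^2)^{\frac n2}\sin\left(n\arctan\left(\frac{y}{x}\right)\right)$ from the previous lemma and writing $u=y/x$, the map $u\mapsto \sin(n\arctan u)/u$ extends continuously to $[0,\alpha]$ with value $n$ at $0$ and is strictly positive there, since $\alpha<\tan\left(\frac{\pi}{2n}\right)$ forces $n\arctan u\in(0,\frac{\pi}{2})$ for $u\in(0,\alpha]$; hence it attains a positive minimum $\kappa_0>0$. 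Because $(x^2+y^2)^{\frac n2}\ge x^n$, this gives
\[
b_2(x,y)\ge \kappa_0\, x^{n-1}y\qquad\text{for } 0\le y\le\alpha x,\ x>0 .
\]
On the other hand, $F\in\mathcal{O}(\vert z\vert^{n-1})$ together with $\vert(x,y)\vert\le\sqrt{1+\alpha^2}\,x$ inside $\mathcal{C}$ yields $\vert\hat{F_2}(x,y)\vert\le C'x^{n-1}$ with $C'=C(1+\alpha^2)^{\frac{n-1}{2}}$, so the drift of $Y$ inside $\mathcal{C}\cap\{y\ge0\}$ satisfies $b_2(x,y)+\hat{F_2}(x,y)\ge x^{n-1}(\kappa_0 y-C')$.

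Next I would fix $x^*$ large enough that $\kappa_0\frac{\alpha}{4}x^*\ge C'$; then whenever the trajectory lies in $\mathcal{C}$ with $X_s\ge x_1>x^*$ and $Y_s\ge\frac{\alpha}{4}x_1$, the drift of $Y$ is nonnegative, and strictly positive as soon as $Y_s>\frac{\alpha}{4}x_1$. The core is a first-passage/contradiction argument carried out for each $\omega\in B$. If $\nu^+=\infty$ there is nothing to prove, so assume $\nu^+<\infty$; by continuity $Y_{\nu^+}\ge\frac{\alpha}{2}x_1$. Suppose for contradiction that $Y_t<\frac{\alpha}{4}x_1$ for some $t\in[\nu^+,\tau(z)]\cap D$ and let $\rho$ be the first such time, so that $Y_\rho=\frac{\alpha}{4}x_1$ and $Y_s\ge\frac{\alpha}{4}x_1$ for $s\in[\nu^+,\rho]$. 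On this interval we are inside $\mathcal{C}$ (since $\rho\le\tau(z)$) and, on $B$, have $X_s\ge x_1$, so the $Y$-drift is nonnegative there and strictly positive on a neighbourhood of $\nu^+$, where $Y_s>\frac{\alpha}{4}x_1$ by continuity. From
\[
Y_\rho=Y_{\nu^+}+\int_{\nu^+}^{\rho}\big(b_2(X_s,Y_s)+\hat{F_2}(X_s,Y_s)\big)\,\mathrm{d}s+\sigma\big(\Wtwo{\rho}-\Wtwo{\nu^+}\big),
\]
the integral is strictly positive, while on $B$ the increment obeys $\vert\sigma(\Wtwo{\rho}-\Wtwo{\nu^+})\vert\le 2\sup_{t\le T}\sigma\vert\Wtwo{t}\vert\le\frac{\alpha}{4}x_1$. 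Hence $Y_\rho>\frac{\alpha}{2}x_1-\frac{\alpha}{4}x_1=\frac{\alpha}{4}x_1$, contradicting $Y_\rho=\frac{\alpha}{4}x_1$. Therefore no such $\rho$ exists and $Y_t\ge\frac{\alpha}{4}x_1$ throughout $[\nu^+,\tau(z)]\cap D$ on $B$.

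I expect the main obstacle to be the drift estimate rather than the passage argument: one must verify the uniform positive lower bound $\kappa_0$ on $\sin(n\arctan u)/u$ over the angular range of the cone, propagate it to $b_2$, and then choose the threshold $x^*$ so that the $\hat{F_2}$-perturbation can never overturn the sign of the combined drift on $\{Y\ge\frac{\alpha}{4}x_1\}$. Once this sign control is in place, the conclusion follows from a short $\omega$-wise deterministic comparison, using only continuity of $t\mapsto Y_t$ and the increment bound built into the event $B$.
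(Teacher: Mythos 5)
Your proof is correct and follows essentially the same route as the paper's: both establish that, for $x^*$ large, the $y$-drift $b_2+\hat{F_2}$ is nonnegative (indeed positive) on $\mathcal{C}\cap\{Y_s\ge\frac{\alpha}{4}x_1,\,X_s\ge x_1\}$, and combine this with the starting level $\frac{\alpha}{2}x_1$ and the noise-increment bound $2\cdot\frac{\alpha}{8}x_1=\frac{\alpha}{4}x_1$ in a first-passage argument. Your uniform linear bound $\sin(n\arctan u)\ge\kappa_0 u$ on $[0,\alpha]$ is just a repackaging of the paper's estimate $\sin\left(n\arctan\left(\frac{Y_s}{X_s}\right)\right)\ge a_n\wedge\frac{Y_s}{X_s}$, and your first crossing time $\rho$ plays exactly the role of the paper's stopping time $\tau$.
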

\begin{proof}
\newcommand{\tmin}[0]{{(\nu^++t)\wedge\tau}}
Define $\tau\coloneqq\inf\lbrace t>\nu^+\colon Y_t\leq \frac{\alpha}{4}x_1\rbrace \wedge \tau(z)$. We will show $\tau=\tau(z)$ on $B$, which proves the statement.
For $t\geq 0$, such that $\nu^+ + t \in D$ we have
\begin{equation*}
\begin{split}
Y_{\tmin} &=Y_{\nu^+} +\sigma (\Wtwo{\tmin} -\Wtwo{\nu^+})\\
&\hspace{60pt}+\int_{\nu^+}^\tmin b_2(X_s,Y_{s})+\hat{F_2}(X_s,Y_s)\mathrm{d}s\\
&\geq \frac{\alpha}{4}x_1 +\int_{\nu^+}^\tmin \vert (X_s,Y_s) \vert^{n-1} \times \\
&\hspace{60pt}
\left[\sqrt{X_s^2+Y_s^2}\sin\left(n\arctan\left(\frac{Y_s}{X_s}\right)\right)-\frac{\vert \hat{F_2}(X_s,Y_s)\vert}{\vert (X_s,Y_s) \vert^{n-1}}\right]\mathrm{d}s\\
&\geq \frac{\alpha}{4}x_1 +\int_{\nu^+}^\tmin \vert (X_s,Y_s) \vert^{n-1} \times \\
&\hspace{60pt}
\underbrace{ \left[
\sqrt{X_s^2+Y_s^2}\sin\left(n\arctan\left(\frac{Y_s}{X_s}\right)\right)-C
\right]}_{\eqqcolon I}
\mathrm{d}s\\
& \geq \frac{\alpha}{4}x_1
\text{.}
\end{split}
\end{equation*}
We justify the last step by showing $I\geq 0$. First, note that there exists a $b_n >0$, such that for all $z\in [0,b_n)$
\[
\sin(n\arctan(z)) \geq z
\text{.}
\]
Second, define $a_n\coloneqq \sin(n\arctan(b_n))$. Recall, that for $s\in [\nu^+,\tau]$, we have $\alpha X_s \geq Y_s \geq \frac{\alpha}{4}x_1$, which implies
\[
\sin\left(n\arctan\left(\frac{Y_s}{X_s}\right)\right) \geq a_n \wedge \frac{Y_s}{X_s}
\text{.} 
\]
Finally,
\begin{equation*}
I \geq \left( a_n \wedge \frac{Y_s}{X_s}\right)\sqrt{X_s^2+Y_s^2} - C \geq \big(a_nX_s\big) \wedge Y_s - C \geq \big(a_n x_1\big) \wedge \frac{\alpha}{4}x_1 - C 
\end{equation*}
is non-negative if we choose $x^*$ (and therefore also $x_1$) sufficiently large.
\end{proof}

\begin{comment}
If $\nu^+$ is replaced by $\nu^-\coloneqq \inf\{t\geq 0\colon Y_t \leq-\frac{\alpha}{2}x_1\}$ then we obtain in the same way
\[
Y_{t} \leq -\frac{\alpha}{4}x_1
\text{}
\]
for  $t\in[\nu^-,\tau(z)]\cap D$.
\end{comment}
The previous  lemma and  remark are a formal description of what was explained in Section \ref{sec:heuristic}, see also Figure \ref{fig: second/third case}. It will be very useful to show the existence of points which stay inside $\mathcal{C}$ until explosion (Lemma \ref{lemma:inc}).

Recall that $\tau(z)$ is the exit time of $\mathcal{C}$ for $z\in\mathfrak{I}$.
\begin{thesis}
\label{lemma:inc}
\begin{equation*}
\{\sup_{z\in\mathfrak{I}}\tau(z)> T\}\supset \lbrace\sup_{t\in [0,T]}\sigma \vert \Wtwo{t} \vert \leq \frac{\alpha}{8}x_1\rbrace\cap \lbrace\sup_{t\in[0,T]}\sigma \vert\Wone{t}\vert \leq c\rbrace
\text{.}
\end{equation*}
\end{thesis}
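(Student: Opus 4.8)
The plan is to fix the event on the right-hand side,
\[
A:=\Big\{\sup_{t\in[0,T]}\sigma\vert\Wtwo{t}\vert\le\tfrac{\alpha}{8}x_1\Big\}\cap\Big\{\sup_{t\in[0,T]}\sigma\vert\Wone{t}\vert\le c\Big\},
\]
and to show that, on $A$, there exists a single initial condition in $\mathfrak{I}$ whose trajectory does not leave the cone $\mathcal C$ before time $T$. I parametrise $\mathfrak{I}$ by its vertical coordinate, writing $z_{y}:=(x_0,y)$ for $y\in J:=[-\tan(\tfrac{\pi}{2n})x_0,\tan(\tfrac{\pi}{2n})x_0]$, and argue by contradiction: suppose that on $A$ one has $\tau(z_y)\le T$ for every $y\in J$. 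Two facts about working on $A$ will be used throughout. By the reversed Gronwall estimate (\ref{eq:xlowerbound}), on $\{\sup_t\sigma\vert\Wone{t}\vert\le c\}$ the cone condition $\cos(n\arctan(Y/X))\ge\varepsilon$ holds up to the exit time, so $\phi^{(1)}_t(z_y)\ge x_1>x^*$ for all $t<\tau(z_y)$ with $t\le T$; in particular the event $B$ of Lemma \ref{lemma:bounds} is in force for $t\le\tau(z_y)$. Moreover, by the definition $\tau=\overline{\tau}\wedge\underline{\tau}$ and continuity, at the exit time the trajectory lies on the upper ray ($\phi^{(2)}_{\tau}=\alpha\phi^{(1)}_{\tau}$) or on the lower ray ($\phi^{(2)}_{\tau}=-\alpha\phi^{(1)}_{\tau}$).

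Next I introduce the two candidate classes
\[
\mathcal T:=\{y\in J:\ \exists\,t\le T\wedge\tau(z_y),\ \phi^{(2)}_t(z_y)\ge\tfrac{\alpha}{2}x_1\},\qquad \mathcal L:=\{y\in J:\ \exists\,t\le T\wedge\tau(z_y),\ \phi^{(2)}_t(z_y)\le-\tfrac{\alpha}{2}x_1\}.
\]
Both are nonempty: the endpoint $y=\tan(\tfrac{\pi}{2n})x_0$ satisfies $\phi^{(2)}_0(z_y)=y>\tfrac{\alpha}{2}x_1$, so it lies in $\mathcal T$, and symmetrically the lowest endpoint lies in $\mathcal L$. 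They are disjoint on $A$: if some $y$ belonged to both, let $\nu^+$ and $\nu^-$ be the first times $\phi^{(2)}(z_y)$ reaches $\tfrac{\alpha}{2}x_1$ and $-\tfrac{\alpha}{2}x_1$, both $\le T\wedge\tau(z_y)$; assuming without loss of generality $\nu^+\le\nu^-$, Lemma \ref{lemma:bounds} gives $\phi^{(2)}_{\nu^-}(z_y)\ge\tfrac{\alpha}{4}x_1>0$, contradicting $\phi^{(2)}_{\nu^-}(z_y)=-\tfrac{\alpha}{2}x_1$.

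Under the contradiction hypothesis every $z_y$ exits $\mathcal C$ by time $T$; a top exit forces $\phi^{(2)}_{\tau}=\alpha\phi^{(1)}_{\tau}\ge\alpha x_1>\tfrac{\alpha}{2}x_1$ and a bottom exit the mirror inequality, so every $y$ reaches one of the levels $\pm\tfrac{\alpha}{2}x_1$ before exiting, i.e.\ $\mathcal T\cup\mathcal L=J$. It remains to prove $\mathcal T$ and $\mathcal L$ are open in $J$, for then $J$ would be a disjoint union of two nonempty open sets, contradicting its connectedness. For openness of $\mathcal T$, take $y_0\in\mathcal T$. If $y_0>\tfrac{\alpha}{2}x_1$ the inclusion of a neighbourhood is immediate from $\phi^{(2)}_0(z_y)=y$. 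Otherwise the trajectory reaches the level at a positive time and, by Lemma \ref{lemma:bounds}, stays above $\tfrac{\alpha}{4}x_1$ and hence exits through the top, so $\phi^{(2)}_{\overline{\tau}}(z_{y_0})\ge\alpha x_1>\tfrac{\alpha}{2}x_1$; by continuity there is a time $t_0<\overline{\tau}(z_{y_0})\le T$ at which $\phi^{(2)}_{t_0}(z_{y_0})>\tfrac{\alpha}{2}x_1$ strictly while the trajectory is strictly interior to $\mathcal C$. Joint continuity of the local flow in $(t,x)$ then furnishes a neighbourhood of $y_0$ on which the trajectories are defined up to $t_0$, stay strictly inside $\mathcal C$ on $[0,t_0]$ (so $t_0<\tau(z_y)$) and still satisfy $\phi^{(2)}_{t_0}(z_y)>\tfrac{\alpha}{2}x_1$; this neighbourhood lies in $\mathcal T$. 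The case of $\mathcal L$ is symmetric, and the resulting contradiction yields, on $A$, some $y^*$ with $\tau(z_{y^*})>T$, i.e.\ $\sup_{z\in\mathfrak{I}}\tau(z)>T$, which is exactly the claimed inclusion.

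The main obstacle is this openness step. Exit times of diffusions are in general only lower semicontinuous, and the two exit classes could a priori fail to separate if a trajectory grazed the upper boundary tangentially and returned toward the axis. This is precisely what Lemma \ref{lemma:bounds} excludes: once a trajectory rises to level $\tfrac{\alpha}{2}x_1$ it is committed to the upper half and remains above $\tfrac{\alpha}{4}x_1$ until it leaves $\mathcal C$. This commitment makes the classification by exit side robust, and evaluating the height strictly above $\tfrac{\alpha}{2}x_1$ at an \emph{interior} time $t_0$ is what converts continuity of the flow into local constancy of the exit side.
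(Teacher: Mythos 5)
Your proof is correct and follows essentially the same route as the paper: both arguments split the initial segment $\mathfrak{I}$ into a ``committed to the top'' class and a ``committed to the bottom'' class, use Lemma \ref{lemma:bounds} to show the classes are nonempty and disjoint, and derive a contradiction from the connectedness of $\mathfrak{I}$. The only difference is topologically dual and cosmetic: the paper works with the exit-side sets $R,B$ and proves them \emph{closed} via a sequential argument, while you work with the level-hitting sets $\mathcal{T},\mathcal{L}$ and prove them \emph{open} by evaluating the height strictly above $\tfrac{\alpha}{2}x_1$ at an interior time $t_0$ --- the same joint-continuity-of-the-flow input in either case.
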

\begin{proof}
Define the random sets
\begin{equation*}
\begin{split}
R &\coloneqq \{z\in\mathfrak{I}\colon \overline{\tau}(z) \leq \underline{\tau}(z)\wedge T \}\text{,} \\
B &\coloneqq \{z\in\mathfrak{I}\colon \underline{\tau}(z) \leq \overline{\tau}(z)\wedge T \}\text{,} \\
G &\coloneqq \mathfrak{I}\setminus \left(R\cup B \right)
\text{.}
\end{split}
\end{equation*}
Note that $R$ and $B$ are disjoint and
\[
\{\sup_{z\in\mathfrak{I}}\tau(z)> T\} = \{ G\neq \emptyset \}
\text{.}
\]
For ease of notation we define
\begin{equation*}
B_1 \coloneqq \lbrace\sup_{t\in[0,T]}\sigma \vert\Wone{t}\vert \leq c\rbrace  \quad B_2\coloneqq \lbrace\sup_{t\in [0,T]}\sigma \vert \Wtwo{t} \vert \leq \frac{\alpha}{8}x_1\rbrace 
\text{.}
\end{equation*}
Let $\omega \in B_1\cap B_2$.\\
Since $\omega \in B_1$ there is a minimal drift in the $x$-component for all trajectories starting in $\mathfrak{I}$ as long as they stay inside 
$\mathcal{C}$. Furthermore, there is a lower bound in the $x$-coordinate for those trajectories, namely $x_1=x_0-c$.\\
Obviously $R(\omega)$ and $B(\omega)$ are not empty since $(x_0,x_0)\in R(\omega)$ and $(x_0,-x_0)\in B(\omega)$.\\
Assume now that $\omega \in \{G=\emptyset\}$ which is equivalent to $\omega \in\{\mathfrak{I}=R\cup B\}$. We show that $R(\omega)$ and $B(\omega)$ are (non-empty) closed subsets of $\mathfrak{I}$, whose disjoint union is equal to the connected set $\mathfrak{I}$, which is a contradiction.\\
Take a converging sequence $z_n\to z$ with $z_n\in R(\omega)$ for all $n\in\mathbb{N}$ and assume that $z\in B(\omega)$. Then, thanks to the continuity of $\phi_t(z,\omega)$ in $(t,z)$, there is a (random) $n\in\mathbb{N}$ such that
\[
\sup_{t\in[0,\tau(z)]}\vert \phi_{t}^{(2)}(z,\omega)-\phi_{t}^{(2)}(z_n,\omega)  \vert \leq \frac{\alpha}{3}x_1
\text{.}
\]
Due to Lemma \ref{lemma:bounds}, we can conclude that $\phi_{t}^{(2)}(z,\omega) $ was never above $\alpha x_1/2 $ before time $\tau(z)$, and 
therefore $\phi_{t}^{(2)}(z_n,\omega)$ was never above $5 \alpha x_1/6 $. Because of $z\in B(\omega)$, there is a time  $\underline{\tau}(z)(\omega)< T$ such that $\phi_{\underline{\tau}(z)(\omega)}^{(2)}(z,\omega)\leq -\alpha x_1 $, which means that $\phi_{\underline{\tau}(z)(\omega)}^{(2)}(z_n,\omega)\leq -2\alpha x_1/3 $. Again, due to Lemma \ref{lemma:bounds}, $z_n$ cannot be in $R(\omega)$. Since this is a contradiction, we have $z\notin B(\omega)$ and therefore $z\in R(\omega)$. Thus, $R(\omega)$ is closed and, by symmetry, so is $B(\omega)$. Therefore the proof of the lemma is complete.
\end{proof}
\subsection{Proof of Theorem \ref{thm:main}}
Note that with the lower bound on the $x$-component (see (\ref{eq:xlowerbound})) we have the following inclusion
\begin{equation*}
\lbrace\sup_{z\in \mathfrak{I}}\sup_{t\leq T}\phi_{t}^{(1)}(z)=\infty \rbrace \supset \{\sup_{z\in\mathfrak{I}}\tau(z)> T\}\cap \{\sup_{t\in[0,T]}\sigma \vert\Wone{t}\vert \leq c\}\eqqcolon A
\text{.}
\end{equation*}
We show that the probability of $A$  already tends to $1$ as $x_0\to\infty$.
\begin{equation}\label{eineGl}
\Prob{A}{}  \geq \mathbb{P}\left( A,\sup_{t\in [0,T]}\sigma \vert \Wtwo{t} \vert \leq \frac{\alpha}{8}x_1\right)
\end{equation}
Lemma \ref{lemma:inc} allows us to omit the event $\{\sup_{z\in\mathfrak{I}}\tau(z)> T\}$, so the right hand side of \eqref{eineGl} equals 
\begin{align*}
&= \mathbb{P}\left(\sup_{t\in[0,T]}\sigma \vert\Wone{t}\vert \leq c,\sup_{t\in [0,T]}\sigma \vert \Wtwo{t} \vert \leq \frac{\alpha}{8}x_1\right)\\
&\geq 1-\Prob{ \sup_{t\in[0,T]}\sigma \vert\Wone{t}\vert >c}{}-\Prob{ \sup_{t\in [0,T]}\sigma \vert \Wtwo{t} \vert >\frac{\alpha}{8}x_1}{}\\
\end{align*}
which converges to 1 as $x_0 \to \infty$ (which implies $x_1 \to \infty$ and $T \to 0$).
This completes the proof.

\bibliographystyle{alpha}
\bibliography{Bibliography}
\Addresses
\end{document}